\newtheorem{theorem}{Theorem}[section]
\newtheorem{lemma}[theorem]{Lemma}
\newtheorem{remark}[theorem]{Remark}
\newcommand{\R}{{{\mathbb R}}}
\newcommand{\CC}{{{\mathbb C}}}
\begin{document}
\title{\textbf{An Alternative Explicit Expression of the Kernel of the One Dimensional Heat Equation with Dirichlet Conditions.\footnote{Partially supported by Xunta de Galicia (Spain), project EM2014/032 and AIE, Spain and FEDER, grant MTM2016-75140-P.}}}
\date{}
\author{Alberto Cabada\\Departamento de Estat\'{\i}stica, An\'alise Matem\'atica e Optimizaci\'{o}n,\\ Instituto de Matem\'{a}ticas, Facultade de Matem\'aticas,\\Universidade de Santiago de Compostela,\\ Santiago de Compostela, Galicia,
	Spain\\
	alberto.cabada@usc.es
} 
\maketitle

\begin{abstract}
	This paper is devoted to the study of the one dimensional non homogeneous heat equation coupled to Dirichlet Boundary Conditions.
	
	We obtain the explicit expression of the solution of the linear equation by means of a direct integral in an unbounded domain. The main novelty of this expression relies in the fact that the solution is not given as a series of infinity terms. On our expression the solution is given as a sum of two integrals with a finite number of terms on the kernel.
	
	The main novelty is that, on the contrary to the classical method, where the solutions are derived by a direct application of the separation of variables method, on the basis of the spectral theory and the Fourier Series expansion, the solution is obtained by means of the application of the Laplace Transform with respect to the time variable. As a consequence, for any $t \ge 0$ fixed, we must solve an Ordinary Differential Equation on the spatial variable, coupled to Dirichlet Boundary conditions. The solution of such a problem is given by the construction of the related Green's function.

	\vspace{.5cm}
	\noindent \textbf{Key words:} Heat Equation, Dirichlet Problem, Green's functions.\\
	
	\noindent
	\textbf{AMS Subject Classification:} 35C05, \;35C15,  \; 35K05, \; 	35K08, \; 	35K20,\; 35B30 
	 \\
	
\end{abstract}

\section{Introduction}
\label{s-Introduction}

The aim of this paper consists on the construction of a closed expression of the solution 
\[
u: (x,t) \in [0,1] \times [0,\infty) \to u(x,t)\in \R,
\]
with $I=[0,1]$ and  $u,\, u_t, \, u_{xx} \in C(I \times [0,\infty))$, 
of the one dimensional non homogeneous heat equation coupled to Dirichlet Boundary Conditions:
 \begin{eqnarray}
 \label{e-heat}
 \frac{\partial}{\partial t} u(x,t) - \frac{\partial^2}{\partial x^2} u(x,t) & = & F(x,t), \quad t >0; \;\; x \in(0,1),\\
 \label{e-Dirichlet}
u(0,t) = u(1,t) & = & 0, \;\; \quad \qquad  t >0; \\ 
\label{e-initial}
u(x,0) & = & f(x), \qquad x\in I, 
 \end{eqnarray}

It is very well known that, under suitable regularity assumptions on the data of the equation, this problem has a unique continuous and bounded solution which, moreover, see \cite[page 210]{John}, belongs to $C^\infty( I \times (0,\infty))$. 

In particular, such uniqueness holds when $f \in C^2(I)$, $f(0)=f(1)=0$, $f''(0)=f''(1)=0$, $F(0,t)=F(1,t)=0$, $\frac{\partial^2 F}{\partial x^2}(0,t)=\frac{\partial^2 F}{\partial x^2}(1,t)=0$ for all $t \ge 0$, $F$, $\frac{\partial F}{\partial x}$, $\frac{\partial^2 F}{\partial x^2} \in C(I \times [0,\infty))$ and  bounded in $I \times [0,\infty)$.

Moreover it is given as
\[
u(x,t)=u_1(x,t)+u_2(x,t), \quad x \in I, \, t \ge 0,
\] 
with $u_1$ the unique bounded solution of problem \eqref{e-heat} -- \eqref{e-initial} with $F \equiv 0$ and $u_2$ the unique bounded solution of problem \eqref{e-heat} -- \eqref{e-initial} with $f \equiv 0$.
 
 So, the separation variables method shows us that
\begin{equation}
\label{e-u1}
u_1(x,t)= 2 \sum_{n=1}^{\infty}{ \left(\int_{0}^{1}{f(y) \sin{(n \pi y)} dy}\right) e^{-n^2 \pi^2 t} \sin{(n \pi x)}}, \; x \in I, \, t \ge 0.
\end{equation}

It is not difficult to verify that, under these assuptions,  $u_1$, $\frac{\partial u_1}{\partial t}$, $\frac{\partial u_1}{\partial x}$, $\frac{\partial^2 u_1}{\partial x^2}  \in C( I \times [0,\infty))$. 

Let's see that $u_1(x,\cdot)$ belongs to $L^1[0,\infty)$ uniformly on $I$:
\begin{equation*}
	\int_0^\infty{ \left|u_1(x,t)\right| dt} \le
2 \|f\|_\infty \sum_{n=1}^{\infty}{\int_0^\infty{ e^{-n^2 \pi^2 t}}dt} = 2 \|f\|_\infty \sum_{n=1}^{\infty}{\frac{1}{n^2 \pi^2 }} =\frac{\|f\|_\infty}{3}.
\end{equation*}
\begin{remark}
	\label{r-u1}
	Notice that, following the same arguments as before, function $ t \to e^{-a \, t}u_1(x,t)$ belongs to $L^1[0,\infty)$ uniformly on $I$ for all $a>-\pi^2$.
	\end{remark}

Moreover, due to the regularity of $\frac{\partial u_1}{\partial t} (=\frac{\partial^2 u_1}{\partial x^2})$, we have that there is a constant $K>0$ (that depends on the value of $\|f''\|_\infty$) such that
\begin{eqnarray}
\nonumber
\int_0^\infty{ \left|\frac{\partial u_1}{\partial t}(x,t)\right| dt} &=& \int_0^1{ \left|\frac{\partial u_1}{\partial t}(x,t)\right| dt}+\int_1^\infty{ \left|\frac{\partial u_1}{\partial t}(x,t)\right| dt} \\
\nonumber
&\le & \left(K + 2 \sum_{n=1}^{\infty}{\int_1^\infty{ n^2 \pi^2 e^{-n^2 \pi^2 t}}dt}\right)
\\
\nonumber
&\le &  \left(K +  \frac{2}{e^{\pi^2}-1}\right).
\end{eqnarray}

Now, by means of the Duhamel's Principle, we have that 
\begin{equation}
\label{e-u2}
u_2(x,t)= \int_{0}^{t} {u_s(x,t-s) ds},
\end{equation}
where, for each $s \ge 0$, $u_s$ is given by expression \eqref{e-u1} with  $F(y,s)$ instead of $f(y)$, that is, 
\begin{equation}
\label{e-u2-serie}
u_2(x,t)= 2 {\int_{0}^{t}\left[\sum_{n=1}^{\infty} \left(\int_{0}^{1}{F(y,s) \sin{(n \pi y)} dy}\right) e^{-n^2 \pi^2 (t-s)}  \sin{(n \pi x)}\right]ds }.
\end{equation}

However, as far as the author knows, no explicit expression in a finite number of addends is given for the Dirichlet problem \eqref{e-heat}--\eqref{e-initial}.
 
In this paper we obtain the explicit expression of the kernel that gives the unique bounded solution of the Dirichlet problem \eqref{e-heat}--\eqref{e-initial}.  The main result is obtained in Section \ref{s-expression-solution} where the complex form of such kernel is obtained by means of a direct application of the Laplace Transform. Section \ref{s-Green} is devoted to obtain the exact expression of the kernel and to study some of its qualitative properties.

\section{Closed Expression of the Solution}
\label{s-expression-solution}

In this section we obtain the exact expression of the  unique bounded solution of problem \eqref{e-heat} -- \eqref{e-initial}. Such expression is given as the integral in an unbounded domain of a finite sum of members. 

Before obtaining the explicit expression of the solution, we recall the definition of the Laplace Transform of a function $f :[0,\infty) \to \CC$:
\[
{\cal L}f(z) := \int_{0}^{\infty}{e^{-z \, t} f(t) dt}, \quad \mbox{ $z \in \CC$}.
\]

The following convergence result is very well known (see for instance \cite[Proposition 2.4.6]{almira}). 
\begin{theorem}
	\label{t-Laplace-conv}
	Let  $f :[0,\infty) \to \CC$, be a pointwise continuous function in $[0, \infty)$, for which there exist constants $K>0$ and $a \in \R$ such that
	\[
	|f(t)| \le K\, e^{a\, t}, \quad \mbox{for all $t \ge 0$}.
		\] 

Then ${\cal L}f(z)$ is well defined for all $z \in \CC$ with $Re(z) >a$.
\end{theorem}

It is immediate to verify that, if exists $f'$ on $[0,\infty)$ and its Laplace Transform is well defined, then
	\[
{\cal L}(f')(t)= t\, {\cal L}(f)(t)-f(0^+), \quad \mbox{for all $t \ge 0$}.
\]

Now, we enunciate the following version of the Inverse Transform Laplace, which is a corollary of the proof of \cite[Theorem 2.4.9]{almira}.  
\begin{theorem}
	\label{t-Laplace-inverse}
	Let $f :[0,\infty) \to \R$ be a continuous function, such that $e^{-a \,t} f(t) \in  L^1[0, \infty)$ for some $a \in \R$, and there exist the lateral derivatives of $f$ and are finite in $[0,\infty)$. Then, for any $t>0$ and $r>a$ the following identity is fulfilled:
	\begin{eqnarray*}
	f(t) &=& {\cal L}^{-1}(f)(t)  := \lim_{M \to \infty}\frac{1}{2 \, \pi} \int_{-M}^{M}{e^{(r+ i s) t} {\cal L}(f)(r+ i\, s) ds}\\
	& =: & P. V.\frac{1}{2 \, \pi} \int_{-\infty}^{\infty}{e^{(r+ i s) t} {\cal L}(f)(r+ i\, s) ds}.
	\end{eqnarray*}
\end{theorem}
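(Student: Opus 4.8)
The plan is to reduce the stated inversion formula to the classical Fourier inversion theorem. First I would fix $r>a$ and introduce the auxiliary function $g:\R \to \CC$ given by $g(t)=e^{-r\,t}f(t)$ for $t \ge 0$ and $g(t)=0$ for $t<0$. Since for $t \ge 0$ one has $|g(t)| = e^{-(r-a)t}\,e^{-a\,t}|f(t)| \le e^{-a\,t}|f(t)|$ because $r>a$, the hypothesis $e^{-a\,t}f(t)\in L^1[0,\infty)$ yields $g \in L^1(\R)$. A direct computation then identifies the transform along the vertical line as a Fourier transform,
\[
{\cal L}(f)(r+i\,s) = \int_0^\infty e^{-(r+i\,s)t}f(t)\,dt = \int_{-\infty}^{\infty} e^{-i\,s\,t}g(t)\,dt =: \hat{g}(s),
\]
so that the integrand in the statement is exactly $e^{(r+i\,s)t}\hat g(s)$.

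With this identification the symmetric partial integral becomes $e^{r\,t}\,\frac{1}{2\pi}\int_{-M}^{M} e^{i\,s\,t}\hat{g}(s)\,ds$, and it suffices to show that, for each fixed $t>0$, the truncated Fourier integral $\frac{1}{2\pi}\int_{-M}^{M} e^{i\,s\,t}\hat{g}(s)\,ds$ converges to $g(t)$ as $M\to\infty$. Inserting the definition of $\hat g$ and interchanging the order of integration (legitimate by Fubini, since $g\in L^1$ and the $s$-variable ranges over the bounded interval $[-M,M]$, so that the modulus of the integrand is the integrable function $|g(u)|$), this partial integral equals the Dirichlet convolution
\[
\frac{1}{\pi}\int_{-\infty}^{\infty} g(u)\,\frac{\sin\!\big(M(t-u)\big)}{t-u}\,du.
\]

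The main obstacle is the pointwise convergence of this Dirichlet integral to $g(t)$, and this is exactly where the hypothesis on the lateral derivatives enters. After the change of variables $u=t-v$ the integral reads $\frac{1}{\pi}\int_{-\infty}^{\infty} g(t-v)\,\frac{\sin(M v)}{v}\,dv$, and since $\frac{1}{\pi}\int_{-\infty}^{\infty}\frac{\sin(M v)}{v}\,dv=1$ for every $M>0$, its difference with $g(t)$ equals $\frac{1}{\pi}\int_{-\infty}^{\infty}\big(g(t-v)-g(t)\big)\frac{\sin(M v)}{v}\,dv$. I would split this at a small $\delta>0$: on $|v|>\delta$ the function $v \mapsto \big(g(t-v)-g(t)\big)/v$ is integrable (as $g\in L^1$, and separately $\int_{|v|>\delta}\sin(Mv)/v\,dv\to 0$), so the Riemann--Lebesgue lemma sends that contribution to zero; on $|v|\le\delta$ the quotient $\big(g(t-v)-g(t)\big)/v$ stays bounded as $v\to0$, which is guaranteed by the existence of finite one-sided derivatives of $f$ --- and hence of $g$, since $e^{-r\,t}$ is smooth --- at the point $t$, so the quotient is again integrable there and Riemann--Lebesgue applies once more. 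Restricting to $t>0$ is essential here, since it keeps the evaluation point away from the jump of $g$ at the origin, so that the symmetric limit recovers $g(t)$ itself rather than an average of one-sided limits. Finally, multiplying the resulting identity by $e^{r\,t}$ and using $e^{r\,t}g(t)=f(t)$ produces $f(t)$; the independence of the value from the particular choice of $r>a$ is automatic, since $e^{r\,t}g(t)=f(t)$ holds for every such $r$.
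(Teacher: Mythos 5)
Your proof is correct, and it is worth noting that the paper itself contains no proof of this theorem: it is stated as a corollary of the proof of Theorem 2.4.9 of the cited reference \cite{almira}. Your argument is the standard one underlying such inversion results, and it is self-contained: reduce to Fourier inversion for the $L^1(\R)$ function $g$ equal to $e^{-r\,t}f(t)$ on $[0,\infty)$ and to $0$ on $(-\infty,0)$, identify ${\cal L}(f)(r+i\,s)=\hat g(s)$, rewrite the symmetric truncated integral as a Dirichlet-kernel convolution via Fubini, and localize at $t$ using the finite lateral derivatives together with the Riemann--Lebesgue lemma; the restriction $t>0$ correctly keeps the evaluation point away from the jump of $g$ at the origin. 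One point should be tightened: on $|v|>\delta$ the quotient $\big(g(t-v)-g(t)\big)/v$ is \emph{not} integrable as stated, because the constant term contributes $g(t)/v$, which is not in $L^1$ of that region. Your parenthetical already contains the fix --- split into $g(t-v)/v$, which is in $L^1(|v|>\delta)$ so Riemann--Lebesgue applies, and $g(t)\int_{|v|>\delta}\frac{\sin(Mv)}{v}\,dv$, which tends to $0$ because it is a tail of the convergent improper Dirichlet integral --- but it should be presented as that two-term split rather than as integrability of the difference quotient itself. With that rewording the argument is complete and matches what the cited source's proof does.
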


Before deducing an alternative expression of function $u_1$, we introduce the following expression for $m \neq 0$
	\begin{eqnarray}
	\label{e-Green-m}
	g(m,x,y)=  \frac{-1}{m \, \sinh{m}} \,\left\{
	\begin{array}{cc}
\sinh (m (x-1)) \sinh (m y),  & 0\leq y\leq x\leq 1, \\ \\
	\sinh (m x) \sinh (m (y-1)) , & 0<x<y\leq 1, 
	\end{array}
	\right.
	\end{eqnarray}
and
\begin{equation}
\label{e-Green-0}
g(0,x,y)=\left\{
\begin{array}{cc}
(1-x) y, & 0\leq y\leq x\leq 1, \\ \\
x (1-y), & 0<x<y\leq 1. 
\end{array}
\right.
\end{equation}

It is very well know, see \cite{Cab, CCM} for details, that 
\[
v(x)=\int_{0}^{1}{  g(m,x,y) f(y) dy}, \quad x \in I,
\]
is the unique solution of the Dirichlet boundary value problem
\[
-v''(x)+m^2\, v(x)=f(x), \; x \in I, \; m \ge 0, \quad v(0)=v(1)=0.
\]

Now, we are in a position to give an alternative expression to \eqref{e-u1} of function $u_1$. The result is the following.

\begin{theorem}
	\label{t-sol-u1}
	Let $f: I  \to \R$, be such that $f \in C^2(I)$, $f(0)=f(1)=0$ and $f''(0)=f''(1)=0$. Then, the unique bounded solution $u_1:I \times [0,\infty) \to \R$ of problem \eqref{e-heat} --\eqref{e-initial}, with $F \equiv 0$, is given by the expression
	
	\begin{equation}
	\label{e-sol-w}
	u_1(x,t)=P. V.\frac{1}{2 \, \pi} \int_{-\infty}^{\infty}\int_{0}^{1}{ e^{s i t} g(\sqrt{i s},x,y) f(y) dy\, ds}.
	\end{equation}	
\end{theorem}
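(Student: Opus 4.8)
The plan is to apply the Laplace transform in the time variable, thereby turning the heat equation into a spatial Dirichlet problem solved by the Green's function $g$, and then to invert. First I would set
\[
U(x,z):={\cal L}(u_1(x,\cdot))(z)=\int_0^\infty e^{-z\,t}u_1(x,t)\,dt .
\]
The estimate $\int_0^\infty |u_1(x,t)|\,dt\le \|f\|_\infty/3$ and Remark~\ref{r-u1} show that $\int_0^\infty e^{-\sigma t}|u_1(x,t)|\,dt<\infty$ for every $\sigma>-\pi^2$, uniformly in $x$; hence $U(x,\cdot)$ is absolutely convergent on the half-plane $Re(z)>-\pi^2$, and in particular on the imaginary axis. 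The same argument applied to $\partial u_1/\partial t\,(=\partial^2 u_1/\partial x^2)$, using the integrability bound recorded just before the statement, shows that ${\cal L}(\partial u_1/\partial t)(x,z)$ is likewise defined there.

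Next I would transform the equation. By the differentiation rule ${\cal L}(u_t)(x,z)=z\,U(x,z)-u_1(x,0^+)=z\,U(x,z)-f(x)$ recalled after Theorem~\ref{t-Laplace-conv}, and after interchanging the transform with $\partial^2/\partial x^2$, the equation $u_t-u_{xx}=0$ together with \eqref{e-Dirichlet} becomes the two-point boundary value problem
\[
-U_{xx}(x,z)+z\,U(x,z)=f(x),\qquad U(0,z)=U(1,z)=0 .
\]
Writing $m^2=z$, this is precisely the Dirichlet problem whose Green's function is $g(m,x,y)$, so that by uniqueness of its solution
\[
U(x,z)=\int_0^1 g(\sqrt{z},x,y)\,f(y)\,dy .
\]
Because $g(m,x,y)$ is even in $m$ (replacing $m$ by $-m$ leaves \eqref{e-Green-m} unchanged), the choice of branch of $\sqrt{z}$ is irrelevant; the only singularities of $U(x,\cdot)$ occur at $z=-k^2\pi^2$, $k\ge 1$, on the negative real axis, while the value at $z=0$ is regular since $g(0,x,y)=\lim_{m\to 0}g(m,x,y)$.

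Finally I would invert. Since $u_1(x,\cdot)$ is continuous, has finite lateral derivatives, and satisfies $e^{-a t}u_1(x,t)\in L^1[0,\infty)$ for every $a\in(-\pi^2,0)$ by Remark~\ref{r-u1}, Theorem~\ref{t-Laplace-inverse} applies with any $r>a$. Choosing $r=0$ gives
\[
u_1(x,t)=P.V.\frac{1}{2\pi}\int_{-\infty}^{\infty} e^{ist}\,U(x,is)\,ds ,
\]
and inserting the Green's function representation of $U(x,is)$ produces exactly \eqref{e-sol-w}.

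The step I expect to be the main obstacle is the rigorous justification of the two interchanges of limits: differentiating under the Laplace integral to obtain ${\cal L}(u_{xx})=U_{xx}$, and legitimising the contour choice $r=0$, that is, passing from $z=r+is$ with $r>0$ to $z=is$. Both hinge on the uniform-in-$x$ integrability of $u_1$, $\partial u_1/\partial x$ and $\partial^2 u_1/\partial x^2$ gathered before the statement, which feed a dominated-convergence argument. Once these are in place, the identity $U(x,is)=\int_0^1 g(\sqrt{is},x,y)\,f(y)\,dy$ extends by continuity from $Re(z)>0$ to the imaginary axis (the integrand being continuous in $s$, with the apparent singularity at $s=0$ removed as noted above), and the theorem follows.
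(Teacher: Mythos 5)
Your proposal is correct and follows essentially the same route as the paper: take the Laplace transform in $t$, reduce to the spatial Dirichlet problem $-U_{xx}+zU=f$ solved by the Green's function $g(\sqrt{z},x,y)$, and invert via Theorem~\ref{t-Laplace-inverse} with $r=0$, using Remark~\ref{r-u1} for integrability. In fact you are somewhat more careful than the paper's own proof, which derives the Green's function representation only for real $s>0$ and leaves implicit the extension of that identity to the imaginary axis (needed for the contour $r=0$) that you justify via evenness of $g$ in $m$ and continuity off the poles $z=-k^2\pi^2$.
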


\begin{proof}
	First, notice that, from the assumptions on $f$, we have that $\frac{\partial u_1}{\partial t} (x,\cdot) (=\frac{\partial^2 u_1}{\partial x^2} (x,\cdot))$ is continuous in $I \times [0,\infty)$ and, since it is in $L^1[0,\infty)$ uniformly in $I$, we conclude that it is bounded in $I \times [0,\infty)$, which allows us to  define its Laplace transform with respect to the time variable. Moreover, we can also interchange the integral and the derivation with respect to the spatial variable and, as consequence, by denoting
	\[
	{\cal L}(u_1)(x,z) := \int_{0}^{\infty}{e^{-z \, t} u_1(x,t) dt}, \quad \mbox{ $z \in \CC$},
	\]
		we deduce the following property for function $u_1$:
		 	\begin{eqnarray*}
	s \, {\cal L}(u_1)(x,s) - \frac{\partial^2}{\partial x^2} {\cal L}(u_1)(x,s) & = & f(x), \quad s > 0; \;\; x \in(0,1),\\
{\cal L}(u_1)(0,s) = {\cal L}(u_1)(1,s) & = & 0, \;\;  \qquad  s > 0. 
	\end{eqnarray*}
	
	As we have mentioned previously, this problem has, for any $s > 0$ a unique solution, and it is given by 
	\[
{\cal L}(u_1)(x,s)=\int_{0}^{1}{g(\sqrt{s},x,y) f(y) dy}.
	\]
	
	Now, the result holds from  Remark \ref{r-u1} and Theorem \ref{t-Laplace-inverse} (with $r=0$).	
\end{proof}

The expression for $u_2$ follows as a direct consequence of Duhamel's principle.

\begin{theorem}
	\label{t-sol-u2}
	Let function $F: I \times [0,\infty) \to \R$ be such that
	 $F(0,t)=F(1,t)=0$,  $\frac{\partial^2 F}{\partial x^2}(0,t)=\frac{\partial^2 F}{\partial x^2}(1,t)=0$ for all $t \ge 0$, $F$, $\frac{\partial F}{\partial x}$, $\frac{\partial^2 F}{\partial x^2} \in C(I \times [0,\infty))$ and  bounded in $I \times [0,\infty)$.

	Then the unique bounded solution, $u_2:I \times [0,\infty) \to \R$, of \eqref{e-heat} -- \eqref{e-initial} is given by the expression
	\begin{equation}
	\label{e-sol-u2}
	u_2(x,t)= P. V.\frac{1}{2 \, \pi} \int_{0}^{t} \int_{-\infty}^{\infty}\int_{0}^{1}{e^{r i (t-s)} g(\sqrt{i r},x,y) F(y,s) dy\, dr \, ds}.
	\end{equation}

\end{theorem}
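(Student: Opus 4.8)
The plan is to avoid any fresh transform computation and instead reduce the claim entirely to Theorem \ref{t-sol-u1}, using the Duhamel representation \eqref{e-u2} as the bridge. The whole proof rests on the observation that the Duhamel integrand is, for each fixed source-time $s$, a solution of the \emph{homogeneous} problem whose initial datum is the slice $F(\cdot,s)$, and to this homogeneous solution Theorem \ref{t-sol-u1} applies verbatim.

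First I would fix $s \ge 0$ and verify that $y \mapsto F(y,s)$ satisfies the hypotheses imposed on $f$ in Theorem \ref{t-sol-u1}. The regularity assumptions $F,\ \frac{\partial F}{\partial x},\ \frac{\partial^2 F}{\partial x^2} \in C(I \times [0,\infty))$ ensure $F(\cdot,s) \in C^2(I)$, while the boundary assumptions $F(0,t)=F(1,t)=0$ and $\frac{\partial^2 F}{\partial x^2}(0,t)=\frac{\partial^2 F}{\partial x^2}(1,t)=0$, holding for every $t \ge 0$, give in particular $F(0,s)=F(1,s)=0$ and $\frac{\partial^2 F}{\partial x^2}(0,s)=\frac{\partial^2 F}{\partial x^2}(1,s)=0$. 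Thus the hypotheses of Theorem \ref{t-sol-u1} are met by the initial datum $F(\cdot,s)$.

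Second, for each such $s$ the function $u_s$ of \eqref{e-u2} is by definition the unique bounded solution of \eqref{e-heat}--\eqref{e-initial} with $F \equiv 0$ and initial datum $F(\cdot,s)$. Theorem \ref{t-sol-u1} therefore yields, after renaming the spectral variable to $r$,
\begin{equation*}
u_s(x,\tau) = P.V. \frac{1}{2\pi} \int_{-\infty}^{\infty} \int_0^1 e^{r i \tau}\, g(\sqrt{i r},x,y)\, F(y,s)\, dy\, dr, \qquad \tau \ge 0.
\end{equation*}
Evaluating at $\tau = t-s$ and substituting into $u_2(x,t)=\int_0^t u_s(x,t-s)\,ds$ produces precisely the triple integral on the right-hand side of \eqref{e-sol-u2}.

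The one point genuinely requiring care — and the step I expect to be the main obstacle — is moving the outer integral $\int_0^t \cdots ds$ past the principal-value limit that defines the inner transform, so that the $P.V.$ can legitimately be written in front of the whole expression as in \eqref{e-sol-u2}. Here I would show that the truncated integrals $\frac{1}{2\pi}\int_{-M}^{M}\int_0^1 e^{r i (t-s)} g(\sqrt{i r},x,y) F(y,s)\, dy\, dr$ converge as $M \to \infty$ uniformly in $s \in [0,t]$. The uniformity is available because the role of $\|f\|_\infty$ and $\|f''\|_\infty$ in the $L^1$-in-time estimates behind Theorem \ref{t-sol-u1} (those of Remark \ref{r-u1} and of the bound on $\frac{\partial u_1}{\partial t}$) is now played by $\|F(\cdot,s)\|_\infty$ and $\|\frac{\partial^2 F}{\partial x^2}(\cdot,s)\|_\infty$, which are bounded uniformly over the compact set $I \times [0,t]$ by hypothesis. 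Uniform convergence on the compact interval $[0,t]$ then justifies the interchange of the limit with $\int_0^t ds$, and the principal value may be pulled out in front of the entire expression, completing the proof.
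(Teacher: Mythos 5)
Your reduction is, in essence, the paper's entire proof: the paper disposes of this theorem with the single sentence that the expression for $u_2$ ``follows as a direct consequence of Duhamel's principle,'' i.e.\ it applies Theorem \ref{t-sol-u1} to each slice $F(\cdot,s)$ (whose hypotheses are met exactly as you check) and substitutes into \eqref{e-u2}. So your first two paragraphs coincide with the paper's argument, and your third paragraph addresses a point the paper passes over in silence: why the principal-value limit may be pulled outside $\int_0^t\cdots ds$.

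Unfortunately, the justification you propose for that step fails as stated. Write
\[
I_M(s)=\frac{1}{2\pi}\int_{-M}^{M}\int_0^1 e^{ir(t-s)}\,g(\sqrt{ir},x,y)\,F(y,s)\,dy\,dr .
\]
Each $I_M$ is continuous in $s$, but its pointwise limit is discontinuous at $s=t$ whenever $F(x,t)\neq 0$: for $s<t$ the limit is $u_s(x,t-s)$, which tends to $u_t(x,0)=F(x,t)$ as $s\to t^-$, whereas at $s=t$ the symmetric truncations of the inversion integral converge to the half-jump value $F(x,t)/2$, because $u_t(x,\cdot)$ extended by zero to negative times has a jump of size $F(x,t)$ at the origin (note Theorem \ref{t-Laplace-inverse} is only asserted for $t>0$). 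Since a uniform limit of continuous functions is continuous, convergence cannot be uniform on all of $[0,t]$; concretely this is the Gibbs phenomenon, which forces errors of a fixed fraction of $|F(x,s)|$ at points with $t-s\asymp 1/M$. The interchange is still valid, but it needs a different argument: your uniform-in-$s$ bounds do give uniform convergence on $[0,t-\delta]$ for each fixed $\delta>0$ (there $t-s\geq\delta$ stays away from the jump); combine this with a uniform bound $\sup_{M,\,s\in[0,t]}|I_M(s)|<\infty$ (uniform boundedness of partial inversion integrals near a jump, in essence the boundedness of $\int_0^N \frac{\sin u}{u}\,du$), so that $\bigl|\int_{t-\delta}^{t}I_M(s)\,ds\bigr|\leq C\delta$ uniformly in $M$, and then let $\delta\to 0$; equivalently, invoke dominated convergence with that uniform bound, the endpoint $s=t$ being a null set.
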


\section{Properties of the Kernel}

\label{s-Green}
This section is devoted to make an exhaustive study of the properties of the Green's function obtained in previous section. So, to fix ideas, we identify $\sqrt{s i} = \frac{(1+i) \sqrt{\left| s\right|} }{\sqrt{2}}$. So, we have that 

{
	\[	
g(\sqrt{i s},x,y)=  \frac{i-1}{\sqrt{2} \sqrt{\left| s\right| }\sinh\left(\frac{(1+i) \sqrt{\left| s\right| }}{\sqrt{2}}\right)} \,\left\{
\begin{array}{cc}
 \sinh
	\left(\frac{(1+i) (x-1) \sqrt{\left| s\right| }}{\sqrt{2}}\right) \sinh \left(\frac{(1+i) y
		\sqrt{\left| s\right| }}{\sqrt{2}}\right),  & 0\leq y\leq x\leq 1, \\ \\
\sinh
\left(\frac{(1+i) (y-1) \sqrt{\left| s\right| }}{\sqrt{2}}\right) \sinh \left(\frac{(1+i) x
	\sqrt{\left| s\right| }}{\sqrt{2}}\right) , & 0<x<y\leq 1. 
\end{array}
\right.
\]
}

	It is immediate to verify that the expression does not change for any other choice of $\sqrt{s i}$.

Now, one can check the following properties

\begin{lemma}
	\label{l-g-symmetric}
Function $g(\sqrt{s i},x,y)\equiv g_1(s,x,y)+ i\, g_2(s,x,y)$ defined in \eqref{e-Green-m} and \eqref{e-Green-0} satisfies the following symmetry properties:
	\begin{enumerate}
	\item $g(\sqrt{s i},x,y)=g(\sqrt{s i},y,x)$, \quad for all $s \in \R$ and $x, y \in I$. 
\item $g_1(s,x,y)=g_1(-s,x,y)$, \quad for all $s \in \R$ and $x, y \in I$. 
\item $g_2(s,x,y)=-g_2(-s,x,y)$, \quad for all $s \in \R$ and $x, y \in I$. 
	\end{enumerate}
\end{lemma}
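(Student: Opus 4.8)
The plan is to read all three identities off the closed form of $g$, isolating a single conjugation relation from which the parity statements 2 and 3 follow at once.

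I would dispose of property 1 first, directly from \eqref{e-Green-m} and \eqref{e-Green-0}. Interchanging $x$ and $y$ turns the region $0\le y\le x\le 1$ into $0\le x\le y\le 1$ and thereby swaps the two branches of the definition, while inside each branch it merely exchanges the two hyperbolic-sine factors. Because each branch is a product of these factors and the prefactor $-1/(m\sinh m)$ does not involve $x$ or $y$, the value is unchanged; hence $g(\sqrt{si},x,y)=g(\sqrt{si},y,x)$. This is the usual self-adjoint symmetry of the Green's function made explicit, and it holds verbatim after the substitution $m=\sqrt{si}$.

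For properties 2 and 3 the decisive step is the conjugation identity
\[
g\bigl(\sqrt{(-s)\,i},x,y\bigr)=\overline{g\bigl(\sqrt{s\,i},x,y\bigr)},\qquad s\in\R,\ x,y\in I .
\]
To establish it I would first note that $g(m,x,y)$ is \emph{even} in $m$: sending $m\mapsto-m$ changes the sign of $\sinh$ in the prefactor and in each factor, and these sign changes cancel in pairs, so $g(-m,x,y)=g(m,x,y)$; this is precisely why the stated value of $g(\sqrt{si},x,y)$ is insensitive to the choice of square root. Second, since $x$, $y$ and all constants are real and $\sinh$ has real Taylor coefficients, $\overline{\sinh(m a)}=\sinh(\bar m\,a)$ for real $a$, so complex conjugation commutes with the substitution in the first slot: $\overline{g(m,x,y)}=g(\bar m,x,y)$. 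Finally, for real $s$ one has $\overline{s\,i}=(-s)\,i$, and since $s\,i$ is purely imaginary it avoids the branch cut of the principal square root, whence $\sqrt{(-s)\,i}=\sqrt{\,\overline{s\,i}\,}=\overline{\sqrt{s\,i}}$ (the $\pm$ ambiguity being harmless by evenness). Combining these three facts gives the displayed identity.

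With the conjugation identity in hand, I write $g(\sqrt{si},x,y)=g_1(s,x,y)+i\,g_2(s,x,y)$ and equate real and imaginary parts in
\[
g_1(-s,\cdot)+i\,g_2(-s,\cdot)=\overline{g_1(s,\cdot)+i\,g_2(s,\cdot)}=g_1(s,\cdot)-i\,g_2(s,\cdot),
\]
which yields $g_1(-s,x,y)=g_1(s,x,y)$ and $g_2(-s,x,y)=-g_2(s,x,y)$, i.e.\ properties 2 and 3. The value $s=0$ is treated separately from \eqref{e-Green-0}: there $g(0,x,y)$ is real, so $g_2(0,\cdot)=0$ and both identities hold trivially. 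I expect the only delicate point to be the bookkeeping of the square-root branch under $s\mapsto-s$: one must use the evenness in $m$ to absorb the sign ambiguity and confirm that $\sqrt{(-s)\,i}=\overline{\sqrt{s\,i}}$ holds for the branch actually used in the displayed formula. Everything else is a routine check on the two regions of the piecewise definition.
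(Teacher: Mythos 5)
Your proposal is correct, and it proves the lemma by a genuinely different route than the paper. The paper offers no deductive proof at all: the lemma is introduced with ``one can check the following properties,'' and the verification is effectively delegated to the explicit real and imaginary parts $g_1$, $g_2$ (and later the modulus $|g|$) computed with Mathematica, from which the symmetries are read off by inspection. You instead argue structurally: property 1 by inspecting the piecewise formula \eqref{e-Green-m}--\eqref{e-Green-0}, and properties 2--3 from the single Schwarz-type reflection identity $g(\sqrt{(-s)i},x,y)=\overline{g(\sqrt{si},x,y)}$, itself assembled from three easy facts (evenness of $g$ in $m$, realness of the coefficients of $\sinh$ so that $\overline{g(m,x,y)}=g(\bar m,x,y)$, and $\overline{si}=(-s)i$ with the branch ambiguity absorbed by evenness). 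Each approach buys something: the paper's computation produces the long closed-form expressions of $g_1$ and $g_2$ that it reuses afterwards to rewrite $u_1$ and $u_2$ as real integrals, while your argument is short, independent of computer algebra, and explains \emph{why} the symmetries hold. Your careful branch bookkeeping is in fact a genuine improvement: the paper's blanket identification $\sqrt{si}=\frac{(1+i)\sqrt{|s|}}{\sqrt{2}}$, read literally for $s<0$, squares to $-si$ rather than $si$ and would make the expression depend only on $|s|$, contradicting property 3; your requirement that $\sqrt{si}$ actually square to $si$ (e.g.\ $\sqrt{si}=\overline{\sqrt{|s|\,i}}$ for $s<0$), together with evenness in $m$ to neutralize the sign choice, is exactly what makes the odd symmetry of $g_2$ come out right.
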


In fact, by means of Mathematica Package, one arrives to the following expressions of the real and the imaginary part of function $g$, for any $s>0$ (for $s<0$ it will be $-s$) and $0 \le x \le y \le 1$ (in case of $0 \le y < x \le 1$ it is enough to interchange $x$ with $y$)
{\small
	\begin{eqnarray*}
 g_1(s,x,y)=  \frac{1}{\sqrt{s} \left(\cos \left(\sqrt{2} \sqrt{s}\right)-\cosh
 	\left(\sqrt{2} \sqrt{s}\right)\right)} \left( \right. \hspace{5cm}\\
\sqrt{2} \cosh \left(\frac{\sqrt{s}}{\sqrt{2}}\right) \cosh \left(\frac{\sqrt{s}
 		(x-1)}{\sqrt{2}}\right) \cosh \left(\frac{\sqrt{s} y}{\sqrt{2}}\right) \sin
 	\left(\frac{\sqrt{s}}{\sqrt{2}}\right) \sin \left(\frac{\sqrt{s} (x-1)}{\sqrt{2}}\right) \sin
 	\left(\frac{\sqrt{s} y}{\sqrt{2}}\right)\\
 -\sqrt{2} \cos \left(\frac{\sqrt{s}}{\sqrt{2}}\right)
 	\cosh \left(\frac{\sqrt{s} (x-1)}{\sqrt{2}}\right) \cosh \left(\frac{\sqrt{s} y}{\sqrt{2}}\right)
 	\sin \left(\frac{\sqrt{s} (x-1)}{\sqrt{2}}\right) \sinh \left(\frac{\sqrt{s}}{\sqrt{2}}\right) \sin
 	\left(\frac{\sqrt{s} y}{\sqrt{2}}\right)\\
 +\sqrt{2} \cos \left(\frac{\sqrt{s}
 		(x-1)}{\sqrt{2}}\right) \cosh \left(\frac{\sqrt{s}}{\sqrt{2}}\right) \cosh \left(\frac{\sqrt{s}
 		y}{\sqrt{2}}\right) \sin \left(\frac{\sqrt{s}}{\sqrt{2}}\right) \sinh \left(\frac{\sqrt{s}
 		(x-1)}{\sqrt{2}}\right) \sin \left(\frac{\sqrt{s} y}{\sqrt{2}}\right)
 \\+\sqrt{2} \cos
 	\left(\frac{\sqrt{s}}{\sqrt{2}}\right) \cos \left(\frac{\sqrt{s} (x-1)}{\sqrt{2}}\right) \cosh
 	\left(\frac{\sqrt{s} y}{\sqrt{2}}\right) \sinh \left(\frac{\sqrt{s}}{\sqrt{2}}\right) \sinh
 	\left(\frac{\sqrt{s} (x-1)}{\sqrt{2}}\right) \sin \left(\frac{\sqrt{s} y}{\sqrt{2}}\right)
 \\
 +\sqrt{2}
 	\cos \left(\frac{\sqrt{s} y}{\sqrt{2}}\right) \cosh \left(\frac{\sqrt{s}}{\sqrt{2}}\right) \cosh
 	\left(\frac{\sqrt{s} (x-1)}{\sqrt{2}}\right) \sin \left(\frac{\sqrt{s}}{\sqrt{2}}\right) \sin
 	\left(\frac{\sqrt{s} (x-1)}{\sqrt{2}}\right) \sinh \left(\frac{\sqrt{s} y}{\sqrt{2}}\right)\\
 +\sqrt{2}
 	\cos \left(\frac{\sqrt{s}}{\sqrt{2}}\right) \cos \left(\frac{\sqrt{s} y}{\sqrt{2}}\right) \cosh
 	\left(\frac{\sqrt{s} (x-1)}{\sqrt{2}}\right) \sin \left(\frac{\sqrt{s} (x-1)}{\sqrt{2}}\right) \sinh
 	\left(\frac{\sqrt{s}}{\sqrt{2}}\right) \sinh \left(\frac{\sqrt{s} y}{\sqrt{2}}\right)\\
 -\sqrt{2}
 	\cos \left(\frac{\sqrt{s} (x-1)}{\sqrt{2}}\right) \cos \left(\frac{\sqrt{s} y}{\sqrt{2}}\right) \cosh
 	\left(\frac{\sqrt{s}}{\sqrt{2}}\right) \sin \left(\frac{\sqrt{s}}{\sqrt{2}}\right) \sinh
 	\left(\frac{\sqrt{s} (x-1)}{\sqrt{2}}\right) \sinh \left(\frac{\sqrt{s} y}{\sqrt{2}}\right)\\
 +\left.\sqrt{2}
 	\cos \left(\frac{\sqrt{s}}{\sqrt{2}}\right) \cos \left(\frac{\sqrt{s} (x-1)}{\sqrt{2}}\right) \cos
 	\left(\frac{\sqrt{s} y}{\sqrt{2}}\right) \sinh \left(\frac{\sqrt{s}}{\sqrt{2}}\right) \sinh
 	\left(\frac{\sqrt{s} (x-1)}{\sqrt{2}}\right) \sinh \left(\frac{\sqrt{s} y}{\sqrt{2}}\right)\right)
\end{eqnarray*}
}
and
{\small
		\begin{eqnarray*}
	g_2(s,x,y) = \frac{1}{\sqrt{s} \left(\cos \left(\sqrt{2} \sqrt{s}\right)-\cosh
		\left(\sqrt{2} \sqrt{s}\right)\right)} \left( \right. \hspace{5cm}\\
\sqrt{2} \cosh \left(\frac{\sqrt{s}}{\sqrt{2}}\right) \cosh \left(\frac{\sqrt{s}
		(x-1)}{\sqrt{2}}\right) \cosh \left(\frac{\sqrt{s} y}{\sqrt{2}}\right) \sin
	\left(\frac{\sqrt{s}}{\sqrt{2}}\right) \sin \left(\frac{\sqrt{s} (x-1)}{\sqrt{2}}\right) \sin
	\left(\frac{\sqrt{s} y}{\sqrt{2}}\right)\\
+\sqrt{2} \cos \left(\frac{\sqrt{s}}{\sqrt{2}}\right)
	\cosh \left(\frac{\sqrt{s} (x-1)}{\sqrt{2}}\right) \cosh \left(\frac{\sqrt{s} y}{\sqrt{2}}\right)
	\sin \left(\frac{\sqrt{s} (x-1)}{\sqrt{2}}\right) \sinh \left(\frac{\sqrt{s}}{\sqrt{2}}\right) \sin
	\left(\frac{\sqrt{s} y}{\sqrt{2}}\right)\\
-\sqrt{2} \cos \left(\frac{\sqrt{s}
		(x-1)}{\sqrt{2}}\right) \cosh \left(\frac{\sqrt{s}}{\sqrt{2}}\right) \cosh \left(\frac{\sqrt{s}
		y}{\sqrt{2}}\right) \sin \left(\frac{\sqrt{s}}{\sqrt{2}}\right) \sinh \left(\frac{\sqrt{s}
		(x-1)}{\sqrt{2}}\right) \sin \left(\frac{\sqrt{s} y}{\sqrt{2}}\right)\\
+\sqrt{2} \cos
	\left(\frac{\sqrt{s}}{\sqrt{2}}\right) \cos \left(\frac{\sqrt{s} (x-1)}{\sqrt{2}}\right) \cosh
	\left(\frac{\sqrt{s} y}{\sqrt{2}}\right) \sinh \left(\frac{\sqrt{s}}{\sqrt{2}}\right) \sinh
	\left(\frac{\sqrt{s} (x-1)}{\sqrt{2}}\right) \sin \left(\frac{\sqrt{s} y}{\sqrt{2}}\right)\\
-\sqrt{2}
	\cos \left(\frac{\sqrt{s} y}{\sqrt{2}}\right) \cosh \left(\frac{\sqrt{s}}{\sqrt{2}}\right) \cosh
	\left(\frac{\sqrt{s} (x-1)}{\sqrt{2}}\right) \sin \left(\frac{\sqrt{s}}{\sqrt{2}}\right) \sin
	\left(\frac{\sqrt{s} (x-1)}{\sqrt{2}}\right) \sinh \left(\frac{\sqrt{s} y}{\sqrt{2}}\right)\\
+\sqrt{2}
	\cos \left(\frac{\sqrt{s}}{\sqrt{2}}\right) \cos \left(\frac{\sqrt{s} y}{\sqrt{2}}\right) \cosh
	\left(\frac{\sqrt{s} (x-1)}{\sqrt{2}}\right) \sin \left(\frac{\sqrt{s} (x-1)}{\sqrt{2}}\right) \sinh
	\left(\frac{\sqrt{s}}{\sqrt{2}}\right) \sinh \left(\frac{\sqrt{s} y}{\sqrt{2}}\right)\\
-\sqrt{2}
	\cos \left(\frac{\sqrt{s} (x-1)}{\sqrt{2}}\right) \cos \left(\frac{\sqrt{s} y}{\sqrt{2}}\right) \cosh
	\left(\frac{\sqrt{s}}{\sqrt{2}}\right) \sin \left(\frac{\sqrt{s}}{\sqrt{2}}\right) \sinh
	\left(\frac{\sqrt{s} (x-1)}{\sqrt{2}}\right) \sinh \left(\frac{\sqrt{s} y}{\sqrt{2}}\right)\\
-\left.\sqrt{2}
	\cos \left(\frac{\sqrt{s}}{\sqrt{2}}\right) \cos \left(\frac{\sqrt{s} (x-1)}{\sqrt{2}}\right) \cos
	\left(\frac{\sqrt{s} y}{\sqrt{2}}\right) \sinh \left(\frac{\sqrt{s}}{\sqrt{2}}\right) \sinh
	\left(\frac{\sqrt{s} (x-1)}{\sqrt{2}}\right) \sinh \left(\frac{\sqrt{s} y}{\sqrt{2}}\right)\right).
\end{eqnarray*}
}

Despite the expression of the real and imaginary parts of the Green's function is difficult to deal with, the value of the Modulus of the complex number is relatively easy to manage. Using  Mathematica package again, one can verify that, for any $s >0$, it is given, for $0 \le y\le x \le 1$, (for $s<0$ it will be $-s$ and for $0 \le y < x \le 1$ it is enough to interchange $x$ with $y$) by the expression 
{
	\begin{eqnarray*}
|g(\sqrt{s i},x,y)|= 
 \sqrt{\frac{\left(\cos \left(\sqrt{2} \sqrt{s} (x-1)\right)-\cosh \left(\sqrt{2} \sqrt{s}
			(x-1)\right)\right) \left(\cos \left(\sqrt{2} \sqrt{s} y\right)-\cosh \left(\sqrt{2} \sqrt{s}
			y\right)\right)}{2\, s \left(\cosh \left(\sqrt{2} \sqrt{s}\right)-\cos \left(\sqrt{2}
			\sqrt{s}\right)\right)}}	
		\end{eqnarray*}
}

Moreover $|g(0,x,y)|= y(1-x)$ for all $x,y \in I$.

As a consequence, for any $s\in\R$ and $x \in I$, it is obvious that
\[
\max_{y \in I}{\{|g(\sqrt{s i},x,y)|\}} = |g(\sqrt{s i},x,x)|
\]
and that
\[	
\lim_{|s|\to \infty} \sqrt{|s|} \,|g(\sqrt{s i},x,x)|= \frac{1}{2} \quad \mbox{for all $x \in (0,1)$}.
\]

Moreover, due to properties of symmetry showed in Lemma \ref{l-g-symmetric}, it is obtained the following expression for $u_1$ and $u_2$:

\[
u_1(x,t)=\frac{1}{\pi} \int_0^\infty \int_0^1 \left(g_1(s,x,y) \cos{s\,t} - g_2(s,x,y)\sin{s\,t}  \right) f(y) dy ds
\]
and
\[
u_2(x,t)=\frac{1}{\pi} \int_0^t \int_0^\infty \int_0^1 \left(g_1(r,x,y) \cos{r\,(t-s)} - g_2(r,x,y)\sin{r\,(t-s)}  \right) F(y,s) dy dr ds
\]

\subsection{Final Remarks}
We point out that the expression obtained for both $u_1$ and $U_2$ holds for funcions $f$ and $F$ under weaker assuptions as the one impossed in Theorems \ref{t-sol-u1} and \ref{t-sol-u2}. They make sense if such integrals, and the correspondent derivatives with respect to  $t$, are well defined.

In particular, it is not needed either the boundedness or the continuity at $t=0$ of function $F$.

Moreover, we can also consider the discontinuous and unbounded function $F(x,t)=\frac{\left(8 \pi ^2 t+1\right) \sin (2 \pi  x)}{2 \sqrt{t}}$, for which equation \eqref{e-sol-u2} gives us the unbouded solution
\[u_2(x,t)=\sqrt{t} \sin (2 \pi  x).\]

\end{document}